\numberwithin{equation}{section}
\theoremstyle{definition}
\newtheorem{thm}{Theorem}[section]
\newtheorem{cor}[thm]{Corollary}
\newtheorem{lem}[thm]{Lemma}
\newtheorem{exa}[thm]{Example}
\newtheorem{defi}[thm]{Definition}
\newtheorem{rem}[thm]{Remark}
\newtheorem{note}[thm]{Notation}
\DeclareMathOperator{\p3}{\mathbb{P}^3}
\DeclareMathOperator{\Spec}{\mathrm{Spec}}
\DeclareMathOperator{\mo}{\mathcal{O}}
\newcommand{\mr}[1]{\mathrm{#1}}
\newcommand{\mb}[1]{\mathbb{#1}}
\newcommand{\mc}[1]{\mathcal{#1}}
\newcommand{\ov}[1]{\overline{#1}}
\begin{document}

\title{Examples of varieties with index one on $C_1$-fields}

\author[A. Dan]{Ananyo Dan}

\address{BCAM - Basque Centre for Applied Mathematics, Alameda de Mazarredo 14,
48009 Bilbao, Spain}

\email{adan@bcamath.org}

\author[I. Kaur]{Inder Kaur}

\address{Instituto de Matem\'{a}tica Pura e Aplicada, Estr. Dona Castorina, 110 - Jardim Bot\^{a}nico, Rio de Janeiro - RJ, 22460-320, Brazil}

\email{inder@impa.br}

\subjclass[2010]{Primary $12$G$05$, $16$K$50$, $14$D$20$, $14$J$60$, Secondary $14$L$24$, $14$D$22$}

\keywords{Index of varieties,  $C_1$-fields, geometrically stable sheaves, Galois descent, Galois cohomology, Brauer group}

\date{\today}

\begin{abstract}
Let $K$ be the fraction field of a Henselian discrete valuation ring with algebraically closed residue field $k$.
In this article we give a sufficient criterion for a projective variety over such a field to have index $1$.  
\end{abstract}

\maketitle

\section{Introduction}
A field $K$ is called $C_1$ if any degree $d$ polynomial in $n$ variables with $n>d$ has a non-trivial solution.
The $C_{1}$ conjecture due to Lang, Manin and Koll\'{a}r states that every separably rationally connected variety over a $C_{1}$
field has a rational point. The conjecture has already been proven for several $C_1$-fields (see \cite{ind} for a complete discussion). 
However it is still open in the case when $K$ is the fraction field of a Henselian discrete valuation ring of characteristic $0$ with algebraically 
closed residue field of characteristic $p>0$. Recently, the conjecture was shown to hold trivially for certain rationally connected varieties 
over such fields (see \cite{ind2}).

\vspace{0.2 cm}
It is natural to ask whether a similar conjecture holds if we replace the condition for a rational point by the condition of index one
and weaken the condition on rational connectedness.
Recall that the \emph{index} of a variety $X$, denoted $\mr{ind}(X)$, is the gcd of the set of degrees of zero dimensional cycles on $X$.
In \cite[Corollary $2.5$]{esn2015}, Esnault, Levine and Wittenberg prove that if $X$ is a smooth, projective variety over the fraction field of a Henselian 
discrete valuation ring with algebraically closed residue field of characteristic $0$, then $\mr{ind}(X)$ 
divides the Euler characteristic of the structure sheaf of $X$.
Using this they prove that, in the case $X$ is a rationally connected variety over such a field, we have $\mr{ind}(X)=1$ (see \cite[Corollary $3$]{esn2015}).
Since the Euler characteristic of $\mo_X$ is one if and only if $X$ has arithmetic genus $0$, this gives a positive answer to the 
modified conjecture only for very few choices of $X$.

\vspace{0.2 cm}
In this article we study a weaker notion of index, which we call the linear index. 
Let $K$ be a field of characteristic zero and $X$ a projective $K$-variety.
 We define the \emph{linear index} of $X$, denoted $\mr{ind}_{\mr{lin}}(X)$,
to be the gcd of the Euler characteristic of the set of
 line bundles on $X$. 
The definition of linear index is inspired by Koll\'{a}r's definition of elw-index, denoted $\mr{elw}(X)$, as given in \cite{koll}, which is 
the gcd of the Euler characteristic of all coherent sheaves on $X$. An advantage of using $\mr{ind}_{\mr{lin}}(X)$ over $\mr{elw}(X)$ is that it is much easier
to compute the index of $X$ using the former notion. In particular, it is extremely hard to enumerate the set of all coherent sheaves 
on a variety (even fixing Hilbert polynomial is not sufficient to guarantee boundedness of families of coherent sheaves, see \cite{huy}).
In comparison, the set of all invertible sheaves is given by the Picard group, which is of finite rank in numerous examples.
 Moreover, the Euler characteristic of an invertible sheaf is significantly easier to compute than that of a general coherent sheaf (see 
 Riemann Roch formula for coherent sheaves \cite{baum}). As a result, we are able to give a simple combinatorial criterion under which 
 a variety has index $1$.
 More precisely, 

 \begin{thm}[Theorem \ref{elw9}, Corollary \ref{elw6}]\label{thm1}
 Fix an ample line bundle $H$ on $X$ such that if there exists an invertible sheaf $H_0$ on $X$ with $H_0^{\otimes n} \cong H$ then 
 $n$ must be $1$ or $-1$.
 Suppose that $H^1(\mathcal{O}_X)=0$, $\mr{Pic}(X_{\ov{K}})$ is of rank $r$, generated by $\mc{L}_1, ..., \mc{L}_{r-1}$ and  $\mc{L}_r:=H_{\ov{K}}=H \otimes_K \ov{K}$
   satisfying the following conditions:
   \begin{enumerate}
    \item the ideal $(\deg(\mc{L}_1),\deg(\mc{L}_2),...,\deg(\mc{L}_r))$ in $\mb{Z}$ generated by $\deg(\mc{L}_i)$ for $i=1,...,r$ coincides with the ideal $(1)$,
    where degree of the invertible sheaves are taken with respect to $H_{\ov{K}}$ (see \cite[Definition $1.2.11$]{huy}),
    \item for any $r \times r$-matrix $A=(a_{i,j})$ with integral entries $a_{i,j}$, $a_{r,k}=0$ for all $k<r$, $a_{r,r}=1$, $A \not=\mr{Id}$ and $A^t=\mr{Id}$ for some $t>0$, 
    we have $\sum_j a_{ij} \deg(\mc{L}_j)\not= \deg(\mc{L}_i)$ for some $i>0$. 
   \end{enumerate}
  Then, each $\mc{L}_i$ is $G$-invariant and $\mr{gcd}\{\chi(\mc{L}_i(n))| i=1,...,r \mbox{ and } n \in \mb{Z}\}=1$. Moreover, if $K$ is a $C_1$-field, then 
  $\mr{ind}_{\mr{lin}}(X)=\mr{ind}(X)=1 \, \mbox{ if } \mr{char}(k)=0 \mbox{ and }$ prime-to-p part of  $\mr{ind}(X)$ and  $\mr{ind}_{\mr{lin}}(X)$ equals  $1$  if  $\mr{char}(k)=p>0.$
   \end{thm}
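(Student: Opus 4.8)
The plan is to treat the three assertions separately: $G$-invariance of the $\mc{L}_i$, the equality $\gcd\{\chi(\mc{L}_i(n))\}=1$, and finally the index statement. Write $G=\mr{Gal}(\ov{K}/K)$ and $d=\dim X$. First I would use $H^1(\mo_X)=0$, which makes the Picard scheme zero-dimensional, so that $\mr{Pic}(X_{\ov{K}})$ is finitely generated and, by hypothesis, free of rank $r$ with basis $\mc{L}_1,\dots,\mc{L}_{r-1},\mc{L}_r=H_{\ov{K}}$; here the condition imposed on $H$ is exactly what guarantees that $H_{\ov{K}}$ is primitive and hence eligible to serve as a basis vector. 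Since each generator is defined over a finite extension of $K$, the $G$-action factors through a finite quotient, so every $\sigma\in G$ acts by a matrix $A(\sigma)=(a_{ij})\in\GL_r(\mb{Z})$, defined by $\sigma(\mc{L}_i)=\sum_j a_{ij}\mc{L}_j$, with $A(\sigma)^t=\mr{Id}$ for some $t>0$. I would then record two constraints: $H$ being defined over $K$ forces $\sigma(\mc{L}_r)=\mc{L}_r$, i.e.\ the $r$-th row of $A(\sigma)$ is $(0,\dots,0,1)$; and the degree relative to $H_{\ov{K}}$ is an intersection number stable under $G$, giving $\sum_j a_{ij}\deg(\mc{L}_j)=\deg(\mc{L}_i)$ for all $i$. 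Hypothesis (2) says no matrix of this shape other than $\mr{Id}$ can satisfy all these degree equalities, so $A(\sigma)=\mr{Id}$; as $\sigma$ is arbitrary, $G$ acts trivially and each $\mc{L}_i$ is $G$-invariant.

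Next I would compute the gcd. By the Riemann--Roch theorem for coherent sheaves [baum], $P_i(n):=\chi(\mc{L}_i(n))$ with $\mc{L}_i(n)=\mc{L}_i\otimes H_{\ov{K}}^{\otimes n}$ is an integer-valued polynomial of degree $d$, with leading coefficient $H^d/d!$ and coefficient of $n^{d-1}$ equal to $(\deg(\mc{L}_i)+c)/(d-1)!$, where $c$ depends only on $X$ and $H$. Passing to finite differences — which are integral combinations of the integers $P_i(n)$ — gives $\Delta^d P_i=H^d$ and $\Delta^{d-1}P_i(0)=\deg(\mc{L}_i)+C$ with $C\in\mb{Z}$ independent of $i$. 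Hence $g:=\gcd\{\chi(\mc{L}_i(n))\mid i,n\}$ divides both $H^d$ and $\deg(\mc{L}_i)+C$ for every $i$; taking $i=r$ and using $\deg(\mc{L}_r)=H^d$ shows $g\mid C$, whence $g\mid\deg(\mc{L}_i)$ for all $i$, and condition (1) forces $g=1$.

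For the final statement I would use that a $C_1$-field has $\mr{Br}(K)=0$. The Hochschild--Serre sequence for $\mb{G}_m$ then yields $\mr{Pic}(X)\cong\mr{Pic}(X_{\ov{K}})^G$ (Hilbert~90 kills the $H^1$-term and the obstruction lands in $\mr{Br}(K)=0$), and by the first step $\mr{Pic}(X_{\ov{K}})^G$ is all of $\mr{Pic}(X_{\ov{K}})$, so every $\mc{L}_i(n)$ descends to a line bundle on $X$. Since Euler characteristics are stable under the extension $K\hookrightarrow\ov{K}$, this gives $\mr{ind}_{\mr{lin}}(X)\mid\gcd\{\chi(\mc{L}_i(n))\}=1$, so $\mr{ind}_{\mr{lin}}(X)=1$ and, a fortiori, $\mr{elw}(X)=1$ as line bundles are coherent. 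The comparison of the index with these Euler-characteristic invariants from [esn2015] then gives $\mr{ind}(X)=1$ when $\mr{char}(k)=0$, whereas in residue characteristic $p$ that comparison only controls prime-to-$p$ parts, yielding the stated weaker conclusion.

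The hardest part, I expect, is the matrix argument of the first step, since it is what converts the arithmetic input (conditions (1) and (2) together with $H$ defined over $K$) into the purely algebraic fact that $G$ acts trivially; one must be careful that $\mr{Pic}(X_{\ov{K}})$ really is torsion-free of rank $r$ with $H_{\ov{K}}$ primitive, so that the $G$-action is genuinely by finite-order elements of $\GL_r(\mb{Z})$ preserving degree and fixing the last basis vector — precisely the situation to which (2) applies. The remaining subtlety is the passage from $\mr{ind}_{\mr{lin}}$ through $\mr{elw}$ to $\mr{ind}$, where the characteristic-$p$ descent obstruction in [esn2015] is what forces restriction to prime-to-$p$ parts; by contrast, the Riemann--Roch computation in the middle step is routine bookkeeping.
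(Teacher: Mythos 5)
Your proposal is correct and follows essentially the same route as the paper: Galois descent of the generators to a finite Galois extension, the finite-order matrix $A(\sigma)$ fixing $H$ and preserving degrees being forced to be the identity by condition (2), finite differences of Hilbert polynomials to extract $\deg(\mc{L}_i)$ and invoke condition (1), and finally the Brauer--Picard sequence with $\mr{Br}(K)=0$ plus the Esnault--Levine--Wittenberg comparison. The only (cosmetic) difference is in the gcd step, where you take $d-1$ differences of $\chi(\mc{L}_i(n))$ directly and eliminate the constant using $\deg(\mc{L}_r)=H^d$, whereas the paper first subtracts the Hilbert polynomial of $\mo_{X_{\ov{K}}}$ and then differences.
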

By prime-to-p part of $N$ we mean the largest divisor of $N$ which is prime to $p$. 
   
We use the criterion in Theorem \ref{thm1} to give examples of 
non-rationally connected varieties having index one over a $C_1$-field (see Example \ref{elw8} and Remark \ref{elw11}). 
In Example \ref{elw02}, we give examples in the case $K$ is not a $C_1$-field. 
 
\vspace{0.2 cm}
\emph{Acknowledgements}
The first author is supported by ERCEA Consolidator Grant $615655$-NMST and also
by the Basque Government through the BERC $2014-2017$ program and by Spanish
Ministry of Economy and Competitiveness MINECO: BCAM Severo Ochoa
excellence accreditation SEV-$2013-0323$. The second author is funded by a fellowship from CNPq Brazil.
 A part of this work was 
done when she was visiting ICTP. She warmly thanks ICTP, the Simons Associateship and Prof. 
C. Araujo for making this possible. We also thank the referee for several helpful suggestions.
 
\section{Index of varieties}
  
 \begin{note}\label{e1}
Let $K$ be a field of characteristic $0$ and
$X$ be a projective $K$-variety. 
\end{note}
  
  \begin{defi}
   Given a smooth, quasi-projective $K$-variety $Y$, we define the associated (cohomological) \emph{Brauer group}
 $\mr{Br}(Y)=H^2_{\mbox{\'{e}t}}(Y, \mb{G}_m)$. 
  \end{defi}

  \begin{rem}
    If $K$ is the maximal unramified extension of a complete field, then
  $K$ is a $C_1$-field (see \cite{lang1}). Recall, for any $C_1$-field $K$, we have $\mr{Br}(K)=0$ (see \cite[\S X.$7$]{ser}).
  \end{rem}

   \begin{rem}
 One can check the following elementary properties of $\mr{ind}_{\mr{lin}}(X)$:
 \begin{enumerate}
  \item If $X$ is a projective $K$-curve containing a $K$-rational point, then $\mr{ind}_{\mr{lin}}(X)=1$.
  \item This is not true in higher dimension. If $K=\mb{C}$ and $X$ is a very general smooth, projective quartic surface in $\p3$ 
  (by Noether-Lefschetz theorem, a very general quartic has Picard rank one), then $\mr{ind}_{\mr{lin}}(X)$ is divisible by $2$ (use Riemann-Roch theorem).
  \item For any $X$, $\mr{ind}_{\mr{lin}}(X)$ divides the gcd of the set of Euler characteristics of $H^{\otimes a}$ as $a$
  varies over $\mb{Z}$. In particular, if $X$ is an odd degree surface in $\mb{P}^3_K$, then $\mr{ind}_{\mr{lin}}(X)=1$.
    \end{enumerate}
 \end{rem}

 \begin{lem}\label{elw01}
  Suppose $K$ is the quotient field of a Henselian discrete valuation ring $R$ with algebraically closed residue field $k$. We then have
\begin{enumerate}
 \item if $\mr{char}(k)=0$, then $\mr{ind}(X)$ divides $\mr{ind}_{\mr{lin}}(X)$,
 \item if $\mr{char}(k)=p>0$, then the prime-to-$p$ part of $\mr{ind}(X)$ divides that of $\mr{ind}_{\mr{lin}}(X)$,
 \end{enumerate}
 \end{lem}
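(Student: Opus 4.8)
The plan is to reduce both parts to the theorem of Esnault--Levine--Wittenberg applied to invertible sheaves rather than only to $\mo_X$. Unwinding the definition, the assertion $\mr{ind}(X)\mid\mr{ind}_{\mr{lin}}(X)$ is equivalent to saying that $\mr{ind}(X)$ divides $\chi(L)$ for \emph{every} invertible sheaf $L$ on $X$: once this holds for each individual $L$, the divisibility persists after passing to the gcd that defines $\mr{ind}_{\mr{lin}}(X)$. In the positive-characteristic case the same unwinding reduces (2) to showing that the prime-to-$p$ part of $\mr{ind}(X)$ divides $\chi(L)$ for every $L$.

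The essential input is \cite{esn2015} in its form for general locally free sheaves. For $X$ over the fraction field of a Henselian discrete valuation ring with algebraically closed residue field $k$, that theorem asserts that $\mr{ind}(X)$ divides $\chi(X,\mathcal{E})$ for every vector bundle $\mathcal{E}$ when $\mr{char}(k)=0$, and that the prime-to-$p$ part of $\mr{ind}(X)$ divides $\chi(X,\mathcal{E})$ when $\mr{char}(k)=p>0$. Since an invertible sheaf is a vector bundle of rank one, I would simply specialise $\mathcal{E}=L$ and read off $\mr{ind}(X)\mid\chi(L)$ (resp.\ the prime-to-$p$ statement) for each $L$.

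Taking the gcd over all invertible $L$ then gives (1) directly. For (2), I would use the elementary fact that if $a\mid b$ then the prime-to-$p$ part of $a$ divides the prime-to-$p$ part of $b$; applying this with $a$ the prime-to-$p$ part of $\mr{ind}(X)$ and $b=\chi(L)$, and taking the gcd over $L$, yields that the prime-to-$p$ part of $\mr{ind}(X)$ divides the prime-to-$p$ part of $\mr{ind}_{\mr{lin}}(X)$. Conceptually (for $\mr{char}(k)=0$) this is just the chain $\mr{ind}(X)\mid\mr{elw}(X)\mid\mr{ind}_{\mr{lin}}(X)$, where the second divisibility is automatic because invertible sheaves form a subset of all coherent sheaves.

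The step I expect to be the real content is not the gcd bookkeeping but securing the correct input: the introduction records only the structure-sheaf corollary $\mr{ind}(X)\mid\chi(\mo_X)$, which is genuinely insufficient here, since $\mr{ind}_{\mr{lin}}(X)$ may equal $1$ while $\chi(\mo_X)\neq 1$. One must therefore invoke the full statement of \cite{esn2015} for arbitrary bundles and verify that the hypotheses under which it is proved---in particular smoothness of $X$, which is in force in the applications in Theorem \ref{thm1}---are satisfied in the situation at hand.
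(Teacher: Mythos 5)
Your proposal is correct and follows exactly the paper's route: the paper's entire proof is to invoke \cite[Theorem 3.2]{esn2015} (the general statement for coherent sheaves, not just $\mo_X$), from which the divisibility of $\chi(L)$ for each invertible sheaf $L$ and hence of the gcd is immediate. Your gcd bookkeeping and the remark that the structure-sheaf corollary alone would not suffice are both accurate fillings-in of what the paper leaves implicit.
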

 
 \begin{proof}
 The proof follows easily from \cite[Theorem $3.2$]{esn2015}.
 \end{proof}

 \begin{defi}
 Denote by $G$ the absolute Galois group $\mr{Gal}(\ov{K}/K)$. An invertible sheaf $\mc{L}_{\ov{K}}$ on $X_{\ov{K}}:=X \times_K \Spec(\ov{K})$ 
is called $G$-\emph{invariant} if for any $\sigma \in G$ and the induced morphism $\sigma:X_{\ov{K}} \to X_{\ov{K}}$, 
we have $\sigma^*\mc{L}_{\ov{K}} \cong \mc{L}_{\ov{K}}$.
 \end{defi}

 \begin{thm}\label{elw9}
 Let $H$ be an ample divisor on $X$ such that if there exists an invertible sheaf $H_0$ on $X$ with $H_0^{\otimes n} \cong H$, then $n=1$ or $-1$.
 Suppose that $H^1(\mathcal{O}_X)=0$, $\mr{Pic}(X_{\ov{K}})$ is of rank $r$, generated by $\mc{L}_1, ..., \mc{L}_{r-1}$ and  $\mc{L}_r:=H_{\ov{K}}=H \otimes_K \ov{K}$
   satisfying the following conditions:
   \begin{enumerate}
    \item the ideal $(\deg(\mc{L}_1),\deg(\mc{L}_2),...,\deg(\mc{L}_r))$ in $\mb{Z}$ generated by $\deg(\mc{L}_i)$ for $i=1,...,r$ coincides with the ideal $(1)$,
    where $\deg(\mc{L}_i)$ is with respect to $H_{\ov{K}}$,
    \item for any $r \times r$-matrix $A=(a_{i,j})$ with integral entries $a_{i,j}$, $a_{r,k}=0$ for all $k<r$, $a_{r,r}=1$, $A \not=\mr{Id}$ and $A^t=\mr{Id}$ for some $t>0$, 
    we have $\sum\limits_j a_{ij} \deg(\mc{L}_j)\not= \deg(\mc{L}_i)$ for some $i>0$. 
   \end{enumerate}
  Then, each $\mc{L}_i$ is $G$-invariant and $\mr{gcd}\{\chi(\mc{L}_i(n))| i=1,...,r \mbox{ and } n \in \mb{Z}\}=1$, where 
  $\mc{L}_i(n):=\mc{L}_i \otimes H_{_{\ov{K}}}^{\otimes n}$.
   \end{thm}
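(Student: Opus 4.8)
The plan is to treat the two assertions separately, in both cases by passing to $X_{\ov{K}}$ and studying the natural action of $G=\mr{Gal}(\ov{K}/K)$ on $\Pic(X_{\ov{K}})$. First I would record that the hypothesis $H^1(\mo_X)=0$ forces the identity component of the Picard scheme to vanish, so that $\Pic(X_{\ov{K}})=\NS(X_{\ov{K}})$ is a finitely generated abelian group; together with the hypothesis that it has rank $r$ and is generated by the $r$ elements $\mc{L}_1,\dots,\mc{L}_r$, this makes $\Pic(X_{\ov{K}})$ free of rank $r$ with basis $\mc{L}_1,\dots,\mc{L}_r$. Since the group is finitely generated, the classes $\mc{L}_i$ are defined over a finite extension of $K$, so the Galois action is continuous and factors through a finite quotient, giving a representation $\rho\colon G\to\GL_r(\mb{Z})$, $\sigma\mapsto A_\sigma$, with finite image; in particular $A_\sigma^t=\Id$ for some $t>0$. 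Writing $\sigma^*\mc{L}_i=\sum_j (A_\sigma)_{ij}\mc{L}_j$, two structural facts stand out: because $\mc{L}_r=H_{\ov{K}}$ is the base change of the $K$-divisor $H$, it is $G$-invariant, so the $r$-th row of $A_\sigma$ is $(0,\dots,0,1)$; and because $\sigma$ is an isomorphism of $\ov{K}$-schemes fixing $H_{\ov{K}}$, it preserves all intersection numbers, so $\deg(\sigma^*\mc{L}_i)=\deg(\mc{L}_i)$, i.e. $\sum_j (A_\sigma)_{ij}\deg(\mc{L}_j)=\deg(\mc{L}_i)$ for every $i$.

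The $G$-invariance of each $\mc{L}_i$ then follows from the contrapositive of condition (2). Indeed $A_\sigma$ has integral entries, $r$-th row $(0,\dots,0,1)$, satisfies $A_\sigma^t=\Id$, and preserves all the degrees $\deg(\mc{L}_j)$. If $A_\sigma\neq\Id$ it would be exactly a matrix excluded by condition (2), a contradiction; hence $A_\sigma=\Id$ for all $\sigma\in G$, which says $\sigma^*\mc{L}_i\cong\mc{L}_i$ for all $i$ and all $\sigma$.

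For the gcd statement I would work entirely over the algebraically closed field $\ov{K}$ and set $d=\dim X$, $h=c_1(H_{\ov{K}})$, $D_i=c_1(\mc{L}_i)$. By Hirzebruch--Riemann--Roch each $P_i(n):=\chi(\mc{L}_i(n))=\int_X e^{D_i+nh}\,\mr{td}(X)$ is an integer-valued polynomial in $n$ of degree $d$, whose top two coefficients I would compute to be $\tfrac{1}{d!}\int_X h^d$ in degree $n^d$ (independent of $i$) and $\tfrac{1}{(d-1)!}\bigl(\deg(\mc{L}_i)+c\bigr)$ in degree $n^{d-1}$, where $\deg(\mc{L}_i)=\int_X D_i\,h^{d-1}$ is the degree with respect to $H_{\ov{K}}$ and $c=\tfrac12\int_X c_1(X)\,h^{d-1}$ is a constant independent of $i$. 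The key point is that the dependence on $i$ enters only through $\deg(\mc{L}_i)$, and only in the coefficient of $n^{d-1}$.

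Finally I would pass to finite differences. Let $g$ be the gcd of $\{P_i(n)\mid i,n\}$; since every iterated difference $\Delta^k P_i(0)$ is an integral combination of values $P_i(j)$, each is divisible by $g$. The top difference $\Delta^d P_i(0)=\int_X h^d=\deg(\mc{L}_r)$ is divisible by $g$, and from the $(d-1)$-st differences one extracts $\Delta^{d-1}P_i(0)-\Delta^{d-1}P_j(0)=\deg(\mc{L}_i)-\deg(\mc{L}_j)$, the $i$-independent contributions cancelling, so $g$ divides every $\deg(\mc{L}_i)-\deg(\mc{L}_r)$ as well. Combining these, $g\mid\deg(\mc{L}_i)$ for every $i$, hence $g$ divides $\mr{gcd}_i\,\deg(\mc{L}_i)$, which equals $1$ by condition (1), forcing $g=1$. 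The only delicate step is the Riemann--Roch bookkeeping of the third paragraph: one must verify that the Todd-class corrections contribute only $i$-independent constants, so that they cancel in the relevant differences and leave exactly the degrees $\deg(\mc{L}_i)$ on which condition (1) is imposed.
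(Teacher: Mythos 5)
Your argument is correct and is essentially the paper's proof: the Galois matrices $A_\sigma$ have integral entries, fix the last basis vector, have finite order and preserve degrees, so condition (2) forces $A_\sigma=\mr{Id}$; and iterated finite differences of the Hilbert polynomials extract the $\deg(\mc{L}_i)$, so condition (1) gives gcd equal to $1$. The only divergence is your use of Hirzebruch--Riemann--Roch for the coefficient of $n^{d-1}$ (which tacitly assumes $X$ smooth; note also that $\sigma$ is only a $K$-morphism, not a $\ov{K}$-morphism, though Galois conjugation still preserves Euler characteristics): the paper sidesteps this by taking $\deg(\mc{L}_i)$ to be, by definition, $(d-1)!$ times the leading coefficient of $\chi(\mc{L}_i(n))-\chi(\mo_{X_{\ov{K}}}(n))$ (Huybrechts--Lehn, Definition $1.2.11$), which turns your ``delicate'' Todd-class bookkeeping into a tautology and requires no smoothness.
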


   \begin{proof} 
By \cite[Th\'{e}or\`{e}me $8.5.2$]{ega43}, there exists a 
finite field extension $K'$ of $K$ and an invertible sheaf $\mc{L}'_i$ on $X_{K'}=X \times_K \Spec(K')$ such that 
$\mc{L}_i \cong \mc{L}'_i \otimes_{K'} \ov{K}$
i.e., the pull-back of $\mc{L}'_i$ to $X_{\ov{K}}$ is isomorphic to $\mc{L}_i$. 
Without loss of generality (replace $K'$ by the smallest Galois extension of $K$ containing $K'$), we can assume that $K'$ is a 
finite Galois extension of $K$. Denote by $\mc{L}'_r:=H_{K'}=H \otimes_K K'$. 
Let $\sigma \in \mr{Gal}(K'/K)$ and $\sigma:X_{K'} \to X_{K'}$ the induced morphism. 
Suppose that \[\sigma^*\mc{L}'_i \cong \bigotimes\limits_{j=1}^r (\mc{L}'_j)^{\otimes a_{i,j}} \, \mbox{ for some integer } a_{i,j}, i<r.\]
As $H$ comes from $X$, we have $\sigma^* \mc{L}'_r \cong \mc{L}'_r$. 
Suppose that $\sigma^*\mc{L}'_i \not\cong \mc{L}'_i$ for some $i>0$. Then there exists $j \not= i$ such that $a_{i,j} \not= 0$.
In particular, the matrix $A:=(a_{i,j})$ is not the identity matrix. Note that, $a_{r,j}=0$ for all $j<r$ and $a_{r,r}=1$.
Since $\sigma$ is of finite order, there exists an integer $b$ such that 
\[\mc{L}'_i \cong (\sigma^*)^b (\mc{L}'_i)= \bigotimes\limits_{j=1}^r (\mc{L}'_j)^{\otimes b_{i,j}} \, \mbox{ where } A^b=(b_{i,j}), i=1,...,r, \, j=1,...,r.\]
In other words, $A^b=\mr{Id}$.
Since the Hilbert function of $\mc{L}'_i$ is the same as that of $\sigma^*\mc{L}'_i$, we conclude that 
$\deg(\mc{L}_i')=\sum_j a_{i,j} \deg(\mc{L}'_j)$. But, this contradicts our assumption $(2)$.
Hence, $\sigma^*\mc{L}'_i \cong \mc{L}'_i$ for all $i=1,...,r$. 
In other words, each $\mc{L}_i$ is $G$-invariant.

We now prove that $\mr{gcd}\{\chi(\mc{L}_i(n))|i=1,...,r \mbox{ and } n \in \mb{Z}\}=1$. 
Denote by $P_i(t)$ (resp. $P_0(t)$) the Hilbert polynomial of the invertible sheaf $\mc{L}_i$ (resp. $\mo_{X_{\ov{K}}}$) for $i=1,...,r$.
Note that the leading coefficient of $Q_i(t):=P_i(t)-P_0(t)$ is $\deg(\mc{L}_i)/(d-1)!$, where $d=\dim X$ (see \cite[Definition $1.2.11$]{huy}).
We claim that $\mr{gcd}\{Q_i(n)| n \in \mb{Z}\}$ divides $\deg(\mc{L}_i)$ for each $i=1,...,r$. 
Indeed, denote by $D^1Q_i(t):=Q_i(t+1)-Q_i(t)$ and recursively, $D^jQ_i(t):=D^{j-1}Q_i(t+1)-D^{j-1}Q_i(t)$.
Note that, $D^jQ_i(t)$ is of degree $d-1-j$ with leading coefficient $(d-1)(d-2)...(d-j) \deg(\mc{L}_i)/(d-1)!$ for all $j \ge 1$. Thus, \[D^{d-1}Q_i(t)=(d-1)!\deg(\mc{L}_i)/(d-1)!=\deg(\mc{L}_i).\]
It follows immediately, $\mr{gcd}\{Q_i(n)| n \in \mb{Z}\}$ divides $\deg(\mc{L}_i)$. This proves the claim. 
Now,
\[\mr{gcd}\{\chi(\mc{L}_i(n))|i=1,...,r \mbox{ and } n \in \mb{Z}\}\, \mbox{ divides } \mr{gcd}\{\chi(\mc{L}_i(n))-\chi(\mo_{X_{\ov{K}}}(n)) | i=1,...,r \mbox{ and } n \in \mb{Z}\}\]
which is equal to $\mr{gcd}\{Q_i(n)|n \in \mb{Z}, \, i=1,...,r\}.$
Since the $\mr{gcd}\{Q_i(n)|n \in \mb{Z}, \, i=1,...,r\}$ divides the generator of the ideal $(\deg(\mc{L}_1),\deg(\mc{L}_2), ..., \deg(\mc{L}_r))=(1)$, we conclude that 
\[\mr{gcd}\{\chi(\mc{L}_i(n))|i=1,...,r \mbox{ and } n \in \mb{Z}\} = 1.\]
This proves the theorem.
\end{proof}

 \begin{cor}\label{elw6}
  Suppose $K$ is the quotient field of a Henselian discrete valuation ring $R$ with algebraically closed residue field $k$. 
  If $X$ satisfies the hypothesis of Theorem \ref{elw9}, then 
   $\mr{ind}(X)=1  \mbox{ if } \mr{char}(k)=0 \mbox{ and } \mbox{ prime-to-p part of } \mr{ind}(X) \mbox{ equals } 1 \mbox{ if } \mr{char}(k)=p>0.$
  \end{cor}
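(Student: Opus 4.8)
The plan is to upgrade the geometric conclusion of Theorem \ref{elw9} to a statement about invertible sheaves on $X$ by Galois descent, and then to pass from $\mr{ind}_{\mr{lin}}$ to $\mr{ind}$ using Lemma \ref{elw01}. By Theorem \ref{elw9}, each generator $\mc{L}_i$ of $\Pic(X_{\ov{K}})$ is $G$-invariant and $\mr{gcd}\{\chi(\mc{L}_i(n)) : i=1,\dots,r,\ n \in \mb{Z}\}=1$, where these Euler characteristics are computed on $X_{\ov{K}}$. Since $\mr{ind}_{\mr{lin}}(X)$ is defined via invertible sheaves on $X$ itself, the first task is to realize the $\mc{L}_i$ by genuine invertible sheaves on $X$.

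To this end I would first record that $\mr{Br}(K)=0$: since $R$ is Henselian with algebraically closed residue field, $K$ has trivial Brauer group (in equal characteristic this is the Remark together with Lang's theorem that such a $K$ is $C_1$; in the mixed characteristic case it follows from the structure of the Brauer group of a Henselian discretely valued field with algebraically closed residue field). Next, exactly as in the proof of Theorem \ref{elw9}, I would choose a finite Galois extension $K'/K$ with group $\Gamma$ and invertible sheaves $\mc{L}'_i$ on $X_{K'}$ pulling back to $\mc{L}_i$; the $G$-invariance established there gives $\sigma^*\mc{L}'_i \cong \mc{L}'_i$ for all $\sigma \in \Gamma$, i.e. $\mc{L}'_i \in \Pic(X_{K'})^{\Gamma}$. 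The low-degree exact sequence of the Hochschild--Serre spectral sequence for $X_{K'} \to X$ reads
\[0 \to \Pic(X) \to \Pic(X_{K'})^{\Gamma} \to \mr{Br}(K'/K) \subseteq \mr{Br}(K),\]
where $H^1(\Gamma,(K')^{*})=0$ by Hilbert 90 and $H^0(X_{K'},\mb{G}_m)=(K')^{*}$ because $X$ is a geometrically integral projective variety. As $\mr{Br}(K)=0$, each $\mc{L}'_i$ descends to an invertible sheaf $\mc{M}_i$ on $X$ with $\mc{M}_i \otimes_K \ov{K} \cong \mc{L}_i$.

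Since $H$ is already defined on $X$, the twists $\mc{M}_i(n):=\mc{M}_i \otimes H^{\otimes n}$ are invertible sheaves on $X$ with $\mc{M}_i(n)\otimes_K \ov{K} \cong \mc{L}_i(n)$, and flat base change along $K \hookrightarrow \ov{K}$ preserves all cohomology groups, so $\chi(X,\mc{M}_i(n)) = \chi(X_{\ov{K}},\mc{L}_i(n))$ for every $i$ and $n$. Consequently $\mr{ind}_{\mr{lin}}(X)$, being the gcd of the Euler characteristics of all invertible sheaves on $X$, divides $\mr{gcd}\{\chi(X,\mc{M}_i(n))\} = \mr{gcd}\{\chi(\mc{L}_i(n))\}=1$ by Theorem \ref{elw9}; hence $\mr{ind}_{\mr{lin}}(X)=1$. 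Finally, Lemma \ref{elw01} gives the result: if $\mr{char}(k)=0$ then $\mr{ind}(X)$ divides $\mr{ind}_{\mr{lin}}(X)=1$, and if $\mr{char}(k)=p>0$ then the prime-to-$p$ part of $\mr{ind}(X)$ divides that of $\mr{ind}_{\mr{lin}}(X)=1$.

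I expect the descent step to be the main obstacle. The two points requiring care are the vanishing $\mr{Br}(K)=0$ --- transparent in equal characteristic but needing the Henselian structure theory in the mixed characteristic case, where $K$ need not be known to be $C_1$ --- and the hypotheses for the Hochschild--Serre sequence, in particular that $X$ is geometrically integral so that $H^0(X_{K'},\mb{G}_m)=(K')^{*}$. It is worth emphasizing that the prime-to-$p$ restriction in positive residue characteristic enters only through Lemma \ref{elw01} (i.e. through \cite{esn2015}), and not through the descent, which works integrally.
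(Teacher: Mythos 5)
Your proposal is correct and follows essentially the same route as the paper: descend the $G$-invariant generators $\mc{L}_i$ to $X$ using the Picard--Brauer exact sequence (your finite-level Hochschild--Serre sequence is the same tool) together with the vanishing $\mr{Br}(K)=0$, conclude $\mr{ind}_{\mr{lin}}(X)=1$ from Theorem \ref{elw9} via invariance of Euler characteristics under flat base change, and finish with Lemma \ref{elw01}. Your added care about the mixed-characteristic justification of $\mr{Br}(K)=0$ and the geometric integrality needed for $H^0(X_{K'},\mb{G}_m)=(K')^{*}$ is a reasonable elaboration of points the paper leaves implicit, but it is not a different argument.
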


 \begin{proof}
 Recall the Brauer-Picard exact sequence:
\begin{equation}\label{elw13}
 0 \to \mr{Pic}(X) \to \mr{Pic}(X_{\ov{K}})^G \xrightarrow{\mr{br}_X} \mr{Br}(K) \to \mr{Br}(X).
\end{equation}
 Note that, in this case $\mr{Br}(K)=0$. Hence, every $G$-invariant invertible sheaf on $X_{\ov{K}}$ descends to an invertible sheaf on $X$.
 By Theorem \ref{elw9}, this implies  \[\mr{ind}_{\mr{lin}}(X)=\mr{gcd}\{\chi(\mc{L}_i(n))|i=1,...,r \mbox{ and } n \in \mb{Z}\} = 1.\]
 The corollary then follows directly from Lemma \ref{elw01}.
\end{proof}

The corollary gives numerous examples of smooth, projective varieties with index $1$.  
\begin{exa}\label{elw8}
  Suppose $K$ is the quotient field of a Henselian discrete valuation ring $R$ with algebraically closed residue field $k$.
Let $X$ be a smooth, projective variety with $\deg(H_{\ov{K}})>2$, $H^1(\mathcal{O}_X)=0$, $\mr{Pic}(X_{\ov{K}})$ is of rank $2$ and there exists an invertible sheaf $\mc{L}_0$ of degree coprime to $\deg(H_{\ov{K}})$
(for example, see Remark \ref{elw11} below).
Theorem \ref{elw9} implies that every invertible sheaf on $X_{\ov{K}}$ is $G$-invariant and Corollary \ref{elw6} implies that
\[\mr{ind}(X)=\mr{ind}_{\mr{lin}}(X)=1.\]
Indeed, we simply need to check that the two conditions in Theorem \ref{elw9} are satisfied.
Let $\mc{L}_1$ and $\mc{L}_2:=H_{\ov{K}}$ be the generators of $\mr{Pic}(X_{\ov{K}})$. 
Since $\mc{L}_0$ is a linear combination of $\mc{L}_1$ and $\mc{L}_2$ and $\mr{gcd}(\deg(\mc{L}_0),\deg(H_{\ov{K}}))=1$, we have $\mr{gcd}(\deg(\mc{L}_1),\deg(H_{\ov{K}}))=1$.
In other words, the ideal $(\deg(\mc{L}_1), \deg(\mc{L}_2))=1$ i.e., condition $(1)$ of Theorem \ref{elw9} is satisfied.
Let \[A = \left( \begin{array}{cc}
a_0 & a_1  \\
0 & 1  \end{array} \right)\]
be a matrix with integral entries. Note that, for any integer $b>0$
\[A^b = \left( \begin{array}{cc}
a_0^b & a_1(a_0^{b-1}+a_0^{b-2}+...+1)  \\
0 & 1  \end{array} \right)\]
Then, $A^b=\mr{Id}$ if and only if $a_0^b-1=0=a_1(a_0^{b-1}+a_0^{b-2}+...+1)$.
If $a_1 \not=0$ then $a_0=-1$.
 Since $\deg(\mc{L}_2)>2$ is coprime to $\deg(\mc{L}_1)$, we have $2\deg(\mc{L}_1) \not= a_1\deg(\mc{L}_2)$ for any integer $a_1$.
 Thus condition $(2)$ of Theorem \ref{elw9} is satisfied. Hence, by Theorem \ref{elw9} and Corollary \ref{elw6}, we conclude that $\mc{L}_1$ is $G$-invariant and
$\mr{ind}(X)=\mr{ind}_{\mr{lin}}(X)=1.$
\end{exa}

\begin{rem}\label{elw11}
Fix coordinates $X_0, X_1, X_2, X_3$ on $\mb{P}^3_{\mb{Q}}$.
Take any $d \ge 4$ and $F_1, F_2$ two homogeneous polynomials of degree $d$ in variables $X_i$ and coefficients in $\mb{Q}$. It is easy to check that for 
general $F_1, F_2$, the surface defined by $F:=F_1X_1+F_2X_2$ is smooth and $\mr{rk}(\mr{Pic}(X)) = 2$ (the hyperplane section and the line 
defined by $X_1, X_2$ generate $\mr{Pic}(X)$). For any prime $p$, take $K=\mb{Q}_p^{ur}$, the maximal unramified extension of $\mb{Q}_p$.
Note that $K$ is a $C_1$-field, hence $\mr{Br}(K)=0$. Let $X$ be the surface in $\mb{P}^3_K$ defined by $F$. 
By \cite[Ex. III.$5.5$]{R3}, we have $H^1(\mo_X)=0$.
Then Example \ref{elw8} implies that $\mr{ind}_{\mr{lin}}(X)=1=\mr{ind}(X)$.
One can similarly construct numerous examples of surfaces in $\mb{P}^3_K$ of any degree, satisfying the conditions of Theorem \ref{elw9},
arising from the theory of Noether-Lefschetz locus (see \cite{v2, v3}), thereby having index $1$.
\end{rem}

We now give some examples in the case $\mr{Br}(K) \not= 0$, in particular $K$ is not a $C_1$-field.
\begin{exa}\label{elw02}
Let $K=\mb{R}$ and $G:=\mr{Gal}(\ov{K}/K)$ the absolute Galois group. Recall, $\mr{Br}(\mb{R})=\mb{Z}/2\mb{Z}$. 
Denote by $\mr{br}_X: \mr{Pic}(X_{\ov{K}})^G \to \mr{Br}(\mb{R})$ as in the Brauer-Picard exact sequence \eqref{elw13}.
\begin{enumerate}
 \item We first consider the case, when $\mr{br}_X$ is the zero map.
Let $X$ be the smooth, projective surface in $\mathbb{P}^3_{\mb{R}}$ 
defined by the equation $X_0^2+X_1^2+X_2^2-X_3^2=0$, where $X_i$ are the coordinates of $\p3$ for $i=0,...,3$. 
Note that, $X_{\mb{C}}:=X \times_{\mb{R}} \Spec(\mb{C})$ contains the two lines $L_1:=Z(X_0-iX_1, X_2-X_3)$ 
and $L_2:=Z(X_0+iX_1, X_2-X_3)$. The element of the absolute Galois group $G$ 
sending $i$ to $-i$ interchanges $L_1$ and $L_2$. Since $\mr{Pic}(X_{\mb{C}})=\mb{Z}^{\oplus 2}$, generated by $L_1$ and $L_2$, we conclude that 
the $G$-invariant subgroup  of $\mr{Pic}(X_{\mb{C}})$ is generated as a $\mb{Z}$-module
by $L_1+L_2$, which is linearly equivalent to the hyperplane section $H_{\mb{C}}:=H \otimes_{\mb{R}} \mb{C}$. 
Therefore, in this case $\mr{Pic}(X_{\mb{C}})^G$ consists of points corresponding to multiples of the invertible sheaf $H_{\mb{C}}$.
Since $H_{\mb{C}}$ comes from $X$, the exactness of \eqref{elw13} implies that $\mr{br}_X$ is the zero map.
It is easy to check that $\mr{ind}_{\mr{lin}}(X)=1$ (Euler characteristic of $\mo_X$ is $1$). 
Note that, $\mr{ind}(X)=1$ as $X$ contains $\mb{R}$-rational points.

\item We now consider the case, when $\mr{br}_X$ is non-trivial (equivalently surjective). 
Let $X$ be the $\mb{R}$-plane conic defined by $X_0^2+X_1^2+X_2^2=0$, where $X_i$ are the coordinates of $\mb{P}^2_{\mb{R}}$, for $i=0,1,2$.
In this case, $h^1(\mo_X)=0$, i.e., $\mr{Pic}^0(X)=0$. This implies, there exists an unique invertible sheaf $\mc{L}_{\ov{K}}$ on $X_{\ov{K}}$
of degree $1$, hence it is $G$-invariant. If $\mr{br}_X(\mc{L}_{\ov{K}})=0$, then by the exact sequence \eqref{elw13}
there exists an invertible sheaf $\mc{L}$ on $X$ such that $\mc{L}_{\ov{K}} \cong \mc{L} \otimes_K \ov{K}$.
Since $\deg(\mc{L})=1$, the Riemann-Roch theorem would then imply that $X$ contains a rational point, which gives us a contradiction.
Hence, $\mr{br}_X$ is non-trivial.
Using \cite[($1.3$)]{koll}, we have $\mr{ind}_{\mr{lin}}(X)$ is the gcd of $\mr{ind}(X)$ and $1-\rho_a(X)$. 
Since $\mr{ind}(X)=2$, we have $\mr{ind}_{\mr{lin}}(X)=\mr{ind}(X)=2$ (observe $\rho_a(X)=1$).
\end{enumerate}
\end{exa}


\begin{thebibliography}{ELW15}

\bibitem[BFM75]{baum}
P.~Baum, W.~Fulton, and R.~MacPherson.
\newblock Riemann-roch for singular varieties.
\newblock {\em Publications Math{\'e}matiques de l'IH{\'E}S}, 45:101--145,
  1975.

\bibitem[ELW15]{esn2015}
H.~Esnault, M.~Levine, and O.~Wittenberg.
\newblock Index of varieties over {H}enselian fields and {E}uler characteristic
  of coherent sheaves.
\newblock {\em Journal of Algebraic Geometry}, 24(4):693--718, 2015.

\bibitem[Gro66]{ega43}
A.~Grothendieck.
\newblock {\'E}l{\'e}ments de g{\'e}om{\'e}trie alg{\'e}brique (r{\'e}dig{\'e}s
  avec la collaboration de {J}ean {D}ieudonn{\'e}): {IV}. {\'e}tude locale des
  sch{\'e}mas et des morphismes de sch{\'e}mas, troisi{\`e}me partie.
\newblock {\em Publications Math{\'e}matiques de l'IH{\'E}S}, 28:5--255, 1966.

\bibitem[Har10]{R3}
R.~Hartshorne.
\newblock {\em Deformation Theory}.
\newblock Graduate text in Mathematics. Springer-Verlag, 2010.

\bibitem[HL10]{huy}
D.~Huybrechts and M.~Lehn.
\newblock {\em The geometry of moduli spaces of sheaves}.
\newblock Springer, 2010.

\bibitem[Kau16]{ind}
I.~Kaur.
\newblock {\em The ${C_1}$ conjecture for the moduli space of stable vector
  bundles with fixed determinant on a smooth projective curve}.
\newblock Ph. d. thesis, Freie University Berlin, 2016.

\bibitem[Kau18]{ind2}
I.~Kaur.
\newblock A pathological case of the {$C_1$} conjecture in mixed
  characteristic.
\newblock {\em Mathematical Proceedings of the Cambridge Philosophical
  society}, 2018.

\bibitem[Kol13]{koll}
J.~Koll{\'a}r.
\newblock Esnault-{L}evine-{W}ittenberg indices.
\newblock {\em arXiv preprint arXiv:1312.3923}, 2013.

\bibitem[Lan52]{lang1}
S.~Lang.
\newblock On quasi algebraic closure.
\newblock {\em Annals of Mathematics}, pages 373--390, 1952.

\bibitem[Ser13]{ser}
J.~P. Serre.
\newblock {\em Local fields}, volume~67.
\newblock Springer Science \& Business Media, 2013.

\bibitem[Voi88]{v2}
C.~Voisin.
\newblock Une pr\'{e}cision concernant le th\'{e}or\`{e}me de {N}oether.
\newblock {\em Math. Ann.}, 280(4):605--611, 1988.

\bibitem[Voi89]{v3}
C.~Voisin.
\newblock Composantes de petite codimension du lieu de {N}oether-{L}efschetz.
\newblock {\em Comm. Math. Helve.}, 64(4):515--526, 1989.

\end{thebibliography}
\end{document}